\numberwithin{equation}{section}
\newtheorem{remark}{Remark}
\newtheorem{lemma}{Lemma}
\newcommand{\forestftwo}{
\begin{tikzpicture}[baseline=.4cm]
\draw (0,0)--(0,2/3);
\draw (0,2/3)--(-1/3,1);
\draw (0,2/3)--(1/3,1);
\draw (-2/3,0)--(-2/3,1);
\draw (2/3,0)--(2/3,1);
\draw (1,0)--(1,1);
\end{tikzpicture}
}
\newcommand{\fonefone}{
\begin{tikzpicture}[baseline=.8cm]
\draw (0,0)--(0,-.5);
\draw (0,0)--(-1,2);
\draw (-.5,1)--(0,2);
\draw (0,0)--(1,2);
\node at (1,1.5) {$k$};
\node at (.1, 1.5) {$m$};
\node at (-.9 , 1.5) {$l$};
\node at (-.5,.5) {$j$};
\node at (-.2,-.2) {$i$};
\end{tikzpicture}
}
\newcommand{\cAF}{\mathcal{AF}}
\newcommand{\al}{\alpha}
\newcommand{\cC}{\mathcal C}
\newcommand{\cD}{\mathcal D}
\newcommand{\de}{\delta}
\newcommand{\cF}{\mathcal F}
\newcommand{\bF}{\mathbb F}
\newcommand{\fH}{\mathfrak H}
\newcommand{\scrH}{\mathscr H}
\DeclareMathOperator{\Hilb}{Hilb}
\DeclareMathOperator{\id}{id}
\newcommand{\N}{\mathbf{N}}
\newcommand{\ot}{\otimes}
\newcommand{\pia}{\pi_\alpha}
\newcommand{\R}{\mathbf R}
\newcommand{\cSF}{\mathcal{SF}}
\DeclareMathOperator{\target}{target}
\newcommand{\un}{\underline}
\newcommand{\cU}{\mathcal U}
\newcommand{\vara}{\varphi_\alpha}
\newcommand{\Z}{\mathbf{Z}}
\begin{document}

\title[Haagerup and Kazhdan properties]{On the Haagerup and Kazhdan properties of R. Thompson$'$s groups.}
\thanks{A.B. is supported by European Research Council Advanced Grant 669240 QUEST.
V.J. is supported by the grant numbered DP140100732, Symmetries of subfactors}
\author{Arnaud Brothier and Vaughan F. R. Jones}
\address{Arnaud Brothier\\Department of Mathematics, University of Rome Tor Vergata, Via della Ricerca Scientifica 00133 Roma, Italy}\address{School of Mathematics and Statistics, University of New South Wales, Sydney NSW 2052, Australia}
\email{arnaud.brothier@gmail.com\endgraf
\url{https://sites.google.com/site/arnaudbrothier/}}
\address{Vaughan F. R. Jones\\ Vanderbilt University, Department of Mathematics, 1326 Stevenson Center Nashville, TN, 37240, USA}\email{vaughan.f.jones@vanderbilt.edu}

\begin{abstract}
A machine developed by the second author produces a rich family of unitary representations of the Thompson groups F,T and V.
We use it to give direct proofs of two previously known results.
First, we exhibit a unitary representation of $V$ that has an almost invariant vector but no nonzero $[F,F]$-invariant vectors reproving, at least for $T$, Reznikov's result that any intermediate subgroup between the commutator subgroup of $F$ and $V$ does not have Kazhdan's property (T).
Second, we construct a one parameter family interpolating between the trivial and the left regular representations of $V$.
We exhibit a net of  coefficients for those representations which vanish at infinity on $T$ and converge to $1$ thus reproving Farley's
result that $T$ has the Haagerup property.
\end{abstract}

\maketitle
\vspace{-.4cm}

\section*{Introduction}
Let $F\subset T\subset V$ be the usual Thompson's groups acting on the the unit interval, see \cite{Cannon-Floyd-Parry96}.
In \cite{Jones16-Thompson} it was shown that certain categories $\cC$ with a privileged object $1\in \cC$ give rise to a group of fractions $G_\cC$ and that a functor $\Phi:\cC\to\cD$ provides an action of the group $G_\cC$.
Similar ideas in the context of semigroups were developed by Ore, see for instance  \cite{Maltsev53}.
Thompson's groups $F,T,V$ can be constructed in this way using various categories of forests written $\cF,\cAF,\cSF$.
If $\cD=\Hilb$ is the category of Hilbert spaces with isometries for morphisms, then the functor $\Phi$ gives us a unitary representation of $G_\cC$.
In this article, we consider functors $\Phi:\cF\to\Hilb$ such that $\Phi(n) = \fH^{\ot n}$ and $\Phi(f_{i,n})=\id^{\ot i-1}\ot R\ot \id^{\ot n-i}$ where $R:\fH\to\fH\ot \fH$ is a fixed isometry and $f_{i,n}$ is the forest with $n$ trees all of which are trivial except the $i$th one that has two leaves.
Since any tree is a composition of $f_{i,n}$ we obtain a well defined functor and thus a unitary representation of $F$ that we extend to $V$ via permutations of the tensors, see Section \ref{sec:def} for more details.

In Section \ref{sec:T}, we construct a unitary representation of $V$ having an almost invariant vector but no nonzero $[F,F]$-invariant vectors showing that any intermediate subgroups between $[F,F]$ and $V$ does not have Kazhdan's property (T) \cite{Kazhdan67-T}.
Note that $T$ was proved to not have property (T) initially by Reznikov \cite{Reznikov01} as well as follows from the work of Ghys-Sergiescu and Navas \cite{Ghys-Sergiescu87,Navas02}.

In Section \ref{sec:H}, we give a one parameter family $(\pi_\al, 0\leq \al\leq 1)$ of unitary representations of $V$ interpolating the trivial and the left regular one.
We define some positive definite maps $\vara$ from those representations for which we can explicitly compute their values on $T$.
They actually coincide on $T$ with the family of maps constructed from the proper cocycle of Farley, see Remark \ref{rem:Farley}, but differ on the larger group $V$.
Hence, they provide a $C_0$ positive definite approximation of the identity for $T$ and thus proving Farley's result that $T$ has the Haagerup property (though Farley actually proved it for all of $V$)\cite{Akemann-Walter81, Farley03-H}.

\subsection*{Acknowledgement}
We thank the generous support of the New Zealand Mathematics Research Institute and the warm hospitality we received in Raglan which made  this work possible.

\section{Definitions and notations}\label{sec:def}
We briefly recall the construction of actions of  groups of fractions for the particular cases of $F,T,$ and $V$ and refer to \cite{Jones16-Thompson} and \cite{Cannon-Floyd-Parry96} for more details.
Let $\cF$ be the category of (binary planar) forests whose objects are the natural numbers $\N:=\{1,2,\cdots\}$ and morphisms $\cF(n,m)$ the set of binary planar forests with $n$ roots and $m$ leaves. 
We think of them as planar diagram in the plane $\R^2$ whose roots and leaves are distinct points in $\R\times\{0\}$ and $\R\times \{1\}$ respectively and are counted from left to right.
We compose forests by stacking them vertically so that $p\circ q$ is the forest obtained by stacking on top of $q$ the forest $p$ where the $i$th root of $p$ is attached to the $i$th leaf of $q$. 
We obtain a diagram in the strip $\R\times [0,2]$ that we rescale in $\R\times [0,1].$
For any $n\in \N, 1\leq i\leq n$ we consider the forest $f_{i,n}$, or simply $f_i$ if the context is clear, the forest with $n$ roots and $n+1$ leaves where the $i$th tree of $f_{i,n}$ has two leaves.
For example $$f_{2,4}=\forestftwo\ .$$
Consider the set of pairs of \emph{trees} $(t,s)$ with the same number of leaves, that we quotient by the relation generated by $(t,s)\sim (p\circ t , p\circ s)$ for any forest $p$.
We write $\frac{t}{s}$ for the equivalence class of $(t,s)$.
These form the group of fractions of the category $\cF$ with the multiplication $\frac{t}{s}\cdot \frac{s}{r} = \frac{t}{r}$ and inverse $(\frac{t}{s})^{-1}=\frac{s}{t}$.
It is isomorphic to Thompson's group $F$.

Now consider the category of symmetric forests $\cSF$ with objects $\N$ and morphisms $\cSF(n,m)=\cF(n,m)\times S_m$ where $S_m$ is the symmetric group of $m$ elements. 
Graphically we interpret a morphism $(p,\tau)\in\cSF(n,m)$ as the concatenation of two diagrams.
On the bottom we have the diagram explained above for the forest $p$ in the strip $\R\times [0,1].$
The diagram of $\tau$ is the union of $m$ segments $[x_i , x_{\tau(i)}+(0,1)], i=1,\cdots,m$ in $\R\times [1,2]$ where the $x_i$ are $m$ distinct points in $\R\times \{1\}$ such that $x_i$ is on the left of $x_{i+1}.$
The full diagram of $(p,\tau)$ is obtained by stacking the diagram of $\tau$ on top of the diagram of $p$ such that $x_i$ is the $i$th leaf of $p$.
Given symmetric forests $(q,\tau)\in\cSF(n,m), (p, \sigma )\in \cSF(m,l)$, let $l_i$ be the number of leaves of the $i$th tree of $p$, then we define the composition of morphisms as follows:
$$(p,\sigma)\circ (q,\tau) := ( \tau(p) \circ q , \sigma S( p , \tau ) ),$$
where $\tau(p)$ is the forest obtained from $p$ by permuting its trees such that the $i$th tree of $\tau(p)$ is the $\tau(i)$th tree of $p$ and $S(p,\tau)$ is the permutation corresponding to the diagram obtained from $\tau$ where the $i$th segment $[x_i , x_{\tau(i)} + (0,1)]$ is replaced by $l_{\tau(i)}$ parallel segments.
Thompson's group $V$ is isomorphic to the group of fractions of the category $\cSF$.
Hence, any element of $V$ is an equivalence class of a pair of symmetric \emph{trees}.
Consider $g=\frac{(t,\tau)}{(s,\sigma)} \in V$ and the standard dyadic partitions $(I_1,\cdots,I_n)$ and $(J_1,\cdots,J_n)$ of $[0,1]$ associated to the trees $s$ and $t$ respectively. The element $g$ acting on $[0,1]$ is the unique piecewise linear function with constant slope on each $I_k$ that maps $I_{\sigma^{-1}(i)}$ onto $J_{\tau^{-1}(i)}$ for any $1\leq i\leq n.$

Consider the cyclic group $\Z/m\Z$ as a subgroup of the symmetric group $S_m$ and the subcategory $\cAF\subset \cSF$ of \emph{affine} forests where $\cAF(n,m)=\cF(n,m)\times \Z/m\Z$.
The group of fractions of $\cAF$ is isomorphic to Thompson's group $T$.
We will often identify $\cF$ and $\cAF$ as subcategories of $\cSF$ giving embeddings at the group level $F\subset T\subset V$.

We say that a pair of symmetric trees $( (t , \tau ) , (s, \sigma) )$ is reduced if there are no pairs $( (t',\tau') , (s',\sigma'))$ such that $t'$ has strictly less leaves than $t$ and such that $\frac{(t,\tau)}{(s,\sigma)} = \frac{(t',\tau')}{(s',\sigma')}.$

Let $\Hilb$ be the category of complex Hilbert spaces with isometries for morphisms.
Given an isometry $R:\fH\to\fH\ot\fH$ we construct a functor
$\Phi=\Phi_R:\cF\to \Hilb$ such that $\Phi(n) = \fH^{\ot n}$ and $\Phi(f_{i,n})=\id^{\ot i-1}\ot R\ot \id^{\ot n-i}$ for $i=1,\cdots , n$.
Consider the quotient space 
$$\{ (t , \xi) : t \text{ tree} , \xi\in \Phi(\target(t)) \}/\sim \text{ generated by } (t,\xi)\sim (p\circ t, \Phi(p)\xi), \forall p\in\cF.$$
This quotient space has a pre-Hilbert structure given by $\langle (t,\xi) , (t,\eta)\rangle : = \langle \xi , \eta\rangle$ that we complete into a Hilbert space $\scrH$.
Note that $\scrH$ is the inductive limit of the system of Hilbert spaces $\fH_t:=\{(t,\xi) : \xi \in \Phi(\target(t)) \}$ for trees $t$ such that the embedding $\fH_t\to \fH_{p\circ t}$ is given by $\Phi(p)$.
We denote by $(t,\xi)$ or $\frac{t}{\xi}$ the equivalence class of $(t,\xi)$ inside $\scrH$ and identify $\fH$ and $\fH_t$ as subspaces of $\scrH$.
We have a unitary representation $\pi:F\to\cU(\scrH)$ given by the formula $\pi(\frac{t}{s}) \frac{s}{\xi}:=\frac{t}{\xi}$ that we extend to the group $V$ as follows:
$$\pi \left( \frac{(t,\tau)}{(s,\sigma)} \right) \frac{s}{\xi} := \frac{t}{  \theta(\tau^{-1}\sigma) \xi}, \text{ where } \theta(\kappa)(\eta_1\ot\cdots\ot \eta_n):=\eta_{\kappa^{-1}(1)}\ot\cdots\ot \eta_{\kappa^{-1}(n)}.$$
Note that if $\xi,\eta$ are in the small Hilbert space $\fH$ and $g=\frac{(t,\tau)}{(s,\sigma)} $, then 
\begin{equation}\label{equa:vacuum}
\langle \pi  \left( \frac{(t,\tau)}{(s,\sigma)} \right) \xi , \eta \rangle = \langle \theta(\sigma)\Phi(s) \xi , \theta(\tau) \Phi(t) \eta \rangle.\end{equation}

Consider an orthonormal basis $\{\xi_i : i\in I\}$ of the Hilbert space $\fH$.
The isometry $R$ can be thought of as a possibly infinite matrix with three indices $R_i^{j,k}$ such that $R \xi_i=\sum_{j,k} R_i^{j,k} \xi_j\ot \xi_k$.
We can reinterpret $\Phi(f)$ for a forest $f$ as a partition function.
Given a forest $f$ with $n$ roots and $m$ leaves, we define the set of states $\Omega(f)$ on $f$ as maps $\omega$ from the edges of $f$ to the set of indices $I$.
A \emph{vertex} of $f$ is a trivalent vertex and thus roots and leaves are not vertices.
If $\omega$ is a state on $f$ and $v$ a vertex, then we put $R^\omega_v$ the scalar equal to $R_{\omega(e_-)}^{\omega(e_l),\omega(e_r)}$ where $e_-$ is the edge with target $v$ and $e_l,e_r$ are the edges with source $v$ which goes to the left and right respectively.
Consider some multi-indices $\un i:=(i_1,\cdots,i_n)\in I^n$ and $\un j:=(j_1,\cdots,j_m)\in I^m$ and say that a state $\omega\in\Omega(f)$ is compatible with $(\un i, \un j)$ if $\omega(a_k)=i_k$ and $\omega(b_\ell)=j_\ell$ for all $1\leq k\leq n,1\leq \ell\leq m$ where $a_k$ is the edge with source the $k$th root of $f$ and $b_\ell$ is the edge with target the $\ell$th leaf of $f$.
We can now define the operator $\Phi(f)$ as follows:
$$\langle \Phi(f)\xi_{i_1}\ot\cdots\ot \xi_{i_n} , \xi_{j_1}\ot\cdots\ot\xi_{j_m} \rangle = \sum_{\begin{subarray}{c}\omega\in \Omega(f)\\  \text{ compatible with} (\un i, \un j) \end{subarray}} \prod_{v \text{ a vertex of } f} R_v^\omega$$
with the convention that a product (resp. a sum) over an empty set is equal to one (resp. zero). 
The infinite sum converges since the scalars $R_i^{j,k}$ are matrix coefficients of an isometry.
This formula can be proved by induction on the number of leaves and using the fact that any forest is the composition of some elementary forests $f_{i,n}$.
For example, if we have the following state $$\omega(f)=\fonefone$$ that we represent with the spin of an edge (i.e. the image by $\omega$ of this edge) next to it, then $$\prod_{v \text{ a vertex of } f} R_v^\omega = R^{l,m}_j R^{j,k}_i.$$

\section{Kazhdan's property (T)}\label{sec:T}
Recall that a countable discrete group $G$ has Kazhdan's property (T) if any unitary representation having an almost invariant vector has in fact a nonzero invariant vector \cite{Kazhdan67-T}.

If $u\in\cU(\fH)$ is a unitary and $\zeta\in\fH$ is a unit vector, then the map 
$$R:\fH\to \fH\ot\fH, \xi\mapsto u(\xi)\ot \zeta$$ 
is an isometry which provides us a unitary representation $\pi:V\to \cU(\scrH)$ as described in Section \ref{sec:def}.
We claim that if $ |\langle \zeta , u\zeta\rangle|  \neq 1$, then $\pi$ has no nonzero $[F,F]$-invariant vectors. 
Consider the following four trees 
$$a=f_3f_3f_1f_1 , b = f_4f_3f_2f_1 , c = f_1f_1 , d =f_2f_1$$ and let $t_n$ be the complete binary trees with $2^n$ leaves.
Put $g=\frac{a}{b}, h=\frac{c}{d},$ and $k:=ghg^{-1}h^{-1}$.
Note that 
$$k=\frac{a}{q} \text{ with } q= f_2f_3f_1f_1.$$
Define the element $k_n:=\frac{(a)_n\circ t_n}{(q)_n\circ t_n}$ where $(a)_n$ is the forests with $2^n$ roots and whose each tree is a copy of $a$.
Similarly define $g_n:=\frac{(a)_n\circ t_n}{(b)_n\circ t_n}$ and $h_n=\frac{(c)_n\circ t_n}{(d)_n\circ t_n}$ and observe that $k_n=g_nh_n(g_n)^{-1}(h_n)^{-1}$.
Therefore $k_n$ is in the commutator subgroup $[F,F]$.
Observe that
\begin{align*}
\langle \Phi(q) \xi ,\Phi(a) \eta \rangle & = \langle u^2 \xi \ot u\zeta \ot \zeta \ot u\zeta \ot \zeta , u^2 \eta \ot \zeta \ot u^2\zeta \ot \zeta\ot \zeta  \rangle\\
& = \langle \xi , \eta \rangle \langle u\zeta , \zeta \rangle^2 \langle \zeta , u^2\zeta\rangle, \ \forall \xi , \eta \in\fH 
\end{align*}
and thus $\Phi(a)^*\Phi(q) = C\cdot \id\in B(\fH)$ with $C =  \langle u\zeta , \zeta \rangle^2 \langle \zeta , u^2\zeta\rangle.$

If $\xi:=\ot_{i=1}^{2^n} \xi_i,\eta:=\ot_{i=1}^{2^n} \eta_i$ are elementary tensors of $\fH_{t_n}$, then 
$$\langle \pi(k_n) \xi , \eta \rangle  = \langle \Phi((q)_n)\xi,\Phi((a)_n)\eta\rangle
 = \prod_{i=1}^{2^n} \langle \Phi(q)\xi_i,\Phi(a)\eta_i\rangle 
 =C^{2^n} \langle \xi, \eta \rangle.$$
By linearity and density we obtain that $p_n\pi(k_n)p_n=C^{2^n} p_n$ where $p_n$ is the orthogonal projection onto $\fH_{t_n}$.
Assume that $\xi\in\scrH$ is a $[F,F]$-invariant unit vector and that $|\langle \zeta , u\zeta\rangle|<1$.
This implies that $| C |<1$.
By density, there exists $n$ and a unit vector $\xi'\in \fH_{t_n}$ such that $\Vert \xi - \xi'\Vert<1/4$.
We obtain that 
$$|C|^{2^n} = |\langle\pi(k_n)\xi',\xi'\rangle| \geq |\langle\pi(k_n)\xi,\xi\rangle|-|\langle\pi(k_n)(\xi'-\xi),\xi'\rangle|-|\langle\pi(k_n)\xi',(\xi'-\xi)\rangle|\geq \frac{1}{2},$$
for $n$ as large as we want which implies a contradiction since $|C|<1$ and proves the claim.

Consider $\fH:=\ell^2(\Z)$, the shift operator $u\in\cU(\ell^2(\Z))$, and $\zeta_m\in\ell^2(\Z)$ the characteristic function of $\{1,2,\cdots, h(m)\}$ divided by $\sqrt{h(m)}$ where $h(m)=2m8^m, m\geq 1$.
Let $(\pi(m),\scrH(m))$ be the associated sequence of unitary representations of $V$ and put $(\pi,\scrH):=(\oplus_m\pi(m), \oplus_m\scrH(m))$ their direct sum.
Note that $0<\langle \zeta_m , u\zeta_m\rangle<1$ for any $m\geq 1$ and thus the claim implies that $\pi$ does not have any nonzero $[F,F]$-invariant vectors.
Let $\eta_m$ be the elementary tensor of $\fH^{\ot 2^m}$ where each entry is equal to $\zeta_m$ that we identify with the fraction $\frac{t_m}{\eta_m}\in\fH_{t_m}$ view as an element of $\scrH(m)$.
Define $\xi_m$ to be the unit vector of $\scrH:=\oplus_n \scrH(n)$ which is equal to $\frac{t_m}{\eta_m}$ in the $m$th slot and zero elsewhere.
We claim that the sequence $(\xi_m,m\geq 1)$ is an almost $V$-invariant vector.
Fix $g\in V$ and note that for $m$ big enough there exists $s\in\cF(1,2^m), \kappa,\rho \in S_{2^m}$ such that $g=\frac{(s,\kappa)}{(t_m,\rho)}$.
Moreover, by increasing $m$, we can assume that there exists $p$ such that $p\circ s = t_{2m}$.
We obtain that
\begin{align*}
\langle \pi(g)\xi_m,\xi_m\rangle & =\langle \pi_m \left( \frac{(s,\kappa)}{(t_m,\rho)} \right) \frac{t_m}{\eta_m} , \frac{t_m}{\eta_m} \rangle \\
& =  \langle\frac{s}{\theta(\kappa^{-1}\rho)(\eta_m)},  \frac{t_m}{\eta_m} \rangle = \langle \frac{s}{\eta_m} , \frac{t_m}{ \eta_m} \rangle\\
& = \langle \Phi(p)\eta_m , \Phi(q)\eta_m \rangle  \text{ where } p\circ s =t_{2m} =  q\circ t_m \\
& = \prod_{i \text{ a leaf of } t_{2m} } \langle ( \Phi(p)\eta_m)_i , (\Phi(q)\eta_m)_i \rangle \text{ where } ( \Phi(p)\eta_m)_i \text{ is the $i$th tensor.}
\end{align*}
Since $p\circ s = t_{2m} = q\circ t_m$ we have that any branch of $p$ (resp. $q$) has length smaller or equal than $2m$ (resp. $m$).
This implies that any component of $\Phi(p)\eta_m$ and $\Phi(q)\eta_m$ is equal to $u^a\zeta_m$ for some $0\leq a\leq 2m$.
Since the map $a\in\N\mapsto \langle u^a\zeta_m , \zeta_m\rangle\in \R_+$ is decreasing we obtain 
$$\langle ( \Phi(p)\eta_m)_i , (\Phi(q)\eta_m)_i \rangle\geq \langle u^{2m}\zeta_m,\zeta_m\rangle=\frac{h(m)-2m}{h(m)}.$$
Therefore,
$$\langle \pi(g)\xi_m,\xi_m\rangle\geq \left(\frac{h(m)-2m}{h(m)}\right)^{2^{2m}}=(1-8^{-m})^{4^{m}}\longrightarrow_{m\to \infty} 1,$$
and thus $(\xi_m)_m$ is an almost $V$-invariant vector.
Since $\pi$ does not have any nonzero $[F,F]$-invariant vectors we obtain that any intermediate group between $[F,F]$ and $V$ does not have property (T).

\section{Haagerup property}\label{sec:H}
Recall that a countable discrete group $G$ has the Haagerup property if there exists a sequence $\varphi_n$ of positive definite functions which vanish at infinity on $G$ and such that $\lim_n\varphi_n(g)=1,\ \forall g\in G$ \cite{Akemann-Walter81}.

Consider the free group $\mathbb F_2$ freely generated by $a,b$ and let $\{\de_g:g\in \bF_2\}$ be its classical orthonormal basis.
Identify $\ell^2(\bF_2)^{\ot n}$ with $\ell^2(\bF_2^n)$ and $\de_{g_1}\ot\cdots\ot\de_{g_n}$ with $\de_{g_1,\cdots, g_n}$.
Set $\fH:=\ell^2(\bF_2)$ and define for $0\leq \al\leq 1$ the isometry 
$$R_\al:\ell^2(\bF_2) \to \ell^2(\bF_2\times \bF_2), \de_e\mapsto \al\de_{e,e} + \sqrt{1-\al^2} \de_{a,b}, \de_g\mapsto \de_{ag,bg}, \forall g\in\bF_2,g\neq e.$$
This defines a functor $\Phi_\al : \cF \to \Hilb$ and a unitary representation $\pi_\al:V\to \cU(\scrH_\al)$ as described in Section \ref{sec:def}. 
The associated infinite matrix $(R_g^{h,k}:=\langle R\de_g,\de_h\ot \de_k\rangle)_{g}^{h,k}$ is particularly simple since 
$$R_e^{e,e}=\al, R_e^{a,b}=\sqrt{1-\al^2},R_g^{ag,bg}=1, \forall e\neq g\in \bF_2$$
and zero elsewhere.

Consider the case when $\al=0$ and thus $R_g^{ag,bg}=1$ for any $g\in \bF_2$ and zero elsewhere.
Let $f$ be a forest with $n$ roots and $m$ leaves and observe that $\Phi_0(f)\de_{e,\cdots ,e}=\de_{P(f)}$ where $P(f)\in \bF_2^m$.
The $i$th component $P(f)_i$ is the word in $a,b$ written from right to left corresponding to the path from a root of $f$ to its $i$th leaf such that a left turn (resp. right turn) contributes in adding the letter '$a$' (resp. the letter '$b$').
For example, if $f=f_{1,4} \circ f_{3,3} \circ f_{1,2}$, then $ \Phi_0(f)\de_{e,e} = \de_{aa} \ot \de_{ba} \ot \de_{b} \ot \de_a \ot \de_b = \de_{aa , ba , b , a , b }$.
The next lemma proves that if $t$ is a tree then the set of words in the tuple $P(t)$ remembers completely $t$.

\begin{lemma}\label{lem:perm}
Consider two trees $s,t$ with $n$ leaves.
Assume that there exists a permutation $\sigma\in S_n$ acting on the leaves such that $\sigma(P(s)) = P(t)$.
Then $\sigma=\id$ and $s=t$.
\end{lemma}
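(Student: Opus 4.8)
The plan is to recover the tree $t$ from the set of words $\{P(t)_1,\dots,P(t)_n\}$ by reading each word from right to left and using it as a navigation instruction from the root. Recall that $P(t)_i$ is a word in $a,b$ recording the path from the root to the $i$th leaf, where a left turn contributes an $a$ and a right turn a $b$, written right-to-left; so reading $P(t)_i$ from the \emph{right} gives the turns taken in order starting at the root. First I would observe that a binary tree is completely determined by its set of root-to-leaf paths, viewed as a prefix code: the internal vertices of $t$ are exactly the proper prefixes (read in the navigation order, i.e. right-to-left in our convention) of the words $P(t)_i$, and at each internal vertex the left (resp. right) subtree is attached precisely when some path continues with an $a$ (resp. $b$). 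Since the leaves of a binary tree give a maximal prefix-free set of such words, the multiset $\{P(t)_i\}$ — in fact its underlying set, since distinct leaves give distinct paths — reconstructs $t$ uniquely.

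Next I would use this to pin down $\sigma$ and then $t = s$. The hypothesis $\sigma(P(s)) = P(t)$ says $P(t)_{\sigma(i)} = P(s)_i$ for all $i$, so the two trees have the same underlying set of path-words, hence by the reconstruction principle $s = t$ as trees. It remains to show $\sigma = \id$. This follows from the fact that the leaves of a tree are \emph{ordered from left to right}, and this left-to-right order is determined by the path-words: reading navigation instructions, leaf $i$ lies to the left of leaf $j$ iff at the first turn where $P(s)_i$ and $P(s)_j$ differ, $P(s)_i$ turns left ($a$) and $P(s)_j$ turns right ($b$) — i.e. the words are ordered by the lexicographic order with $a < b$ on the reversed words. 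Since $s = t$ have identical ordered lists of leaves, and $\sigma$ matches the $i$th word of $P(s)$ with the $\sigma(i)$th word of $P(t) = P(s)$, injectivity of $i \mapsto P(s)_i$ forces $\sigma(i) = i$.

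I expect the main obstacle to be purely bookkeeping: being careful that the right-to-left reading convention for the letters is inverted consistently, so that ``navigation from the root'' really corresponds to reading $P(t)_i$ backwards, and that the left-to-right leaf order corresponds to the correct lexicographic order on these reversed words. Once the dictionary ``$t \leftrightarrow$ prefix code $\{P(t)_i\}$, leaf order $\leftrightarrow$ lex order'' is set up cleanly, both conclusions are immediate. A clean way to package the argument is induction on $n$: for $n = 1$ both trees are trivial and $\sigma \in S_1$ is the identity; for $n > 1$, every path-word is nonempty, so splitting $\{1,\dots,n\}$ according to whether the \emph{last} letter of $P(s)_i$ (the first turn from the root) is $a$ or $b$ partitions the leaves into the left and right subtrees of $s$, and the same partition (via $\sigma$) does so for $t$; since left-subtree leaves precede right-subtree leaves in both $s$ and $t$, the permutation $\sigma$ must preserve each block, and stripping the last letter reduces to the two subtrees, to which the inductive hypothesis applies.
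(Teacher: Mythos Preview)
Your proposal is correct. The inductive argument you give at the end---split on the last letter of each word to separate left-subtree leaves from right-subtree leaves, observe $\sigma$ must respect this split, strip that letter, and recurse---is exactly the paper's proof. Your earlier direct argument via prefix-code reconstruction and lex order on reversed words is a mild repackaging of the same observations (distinct leaves give distinct path-words; the tree is determined by its set of root-to-leaf paths; the planar order is recoverable), and once you have $s=t$ the injectivity of $i\mapsto P(s)_i$ alone already forces $\sigma=\id$, so the lexicographic discussion is not strictly needed.
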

\begin{proof}
We prove the lemma by induction on the number of leaves $n\geq 1$.
The result is immediate for $n=1$ and is also clear for $n=2$ since there is only one tree with two leaves and thus $P(s) = P(t) = (a,b)$. The permutation $\sigma$ is necessarily trivial.

Suppose the result is true for any $k$ between $1$ and $n$ and consider $s,t$ trees with $n+1$ leaves and a permutation $\sigma$ such that $\sigma(P(s)) = P(t)$.
Note that there exists trees $s_1,s_2,t_1,t_2$ such that $s = (s_1\bullet s_2)\circ f_1$ and $t=(t_1\bullet t_2)\circ f_1$ where $s_1\bullet s_2$ is the forest with two roots whose first tree is $s_1$ and second $s_2$.
Note that the word $P(s)_i$ finishes by the letter $a$ (resp. the letter $b$) if and only if $i$ is a leaf of $s_1$ (resp. a leaf of $s_2$).
We have the same characterization for the leaves of $t$ and thus necessarily $\sigma$ realizes a bijection from the leaves of $s_j$ onto the leaves of $t_j$ for $j=1,2$.
Observe that $P(s)_i = P(s_1)_ia$ for any leaves of $s_1$.
This implies that $\sigma(P(s_1)) = P(t_1)$.
Similarly we have that $\sigma(P(s_2)) = P(t_2)$ and thus by the induction hypothesis we have that $s_1=t_1, s_2=t_2$ and $\sigma$ is the identity on the leaves of $s_1$ and on the leaves of $s_2$ implying that $\sigma=\id$ and $s=t$.
\end{proof}

The lemma implies that $\pi_0$ contains the left regular representation of $V$.
Indeed, consider some symmetric trees $(t,\tau) , (s,\sigma)$ in $\cSF$ with the same number of leaves.
We have that
\begin{equation}\label{innerproduct}\langle \pi_0 \left( \frac{(t,\tau)}{(s,\sigma)} \right) \de_e , \de_e \rangle = \langle \theta(\sigma)\Phi(s)\de_e , \theta(\tau)\Phi(t)\de_e\rangle = \langle \de_{P(s)} , \de_{\sigma^{-1}\tau(P(t)) } \rangle.\end{equation}
This is nonzero (and then equal to one) if and only if $P(s)=\sigma^{-1}\tau(P(t))$.
In that case Lemma \ref{lem:perm} implies that $s=t$ and $\sigma^{-1}\tau=\id$ and thus $\frac{(t,\tau)}{(s,\sigma)}$ is the trivial group element.
We obtain that the cyclic representation generated by $\de_e$ for $\al=0$ is the left regular representation of $V$.
If $\al=1$, then the cyclic representation generated by $\de_e$ is the trivial one.
Indeed, if $t$ is a tree with $n$ leaves, then $\Phi(t)\de_e = \de_{e}\ot\cdots\ot\de_e$ with $n$ tensors and thus the coefficient \eqref{innerproduct} is always equal to one for any choice of $g=\frac{(t,\tau)}{(s,\sigma)}\in V.$
Our family $\pi_\al$ of representations of $V$ provides an interpolation between the trivial and the left regular representations.

Consider the family of positive definite maps $\vara(g):=\langle\pia(g)\de_e,\de_e\rangle$ with $0\leq \al<1$.
We will show that they vanish at infinity for $g\in T$ and tends to the identity when $\al$ tends to one.
Fix $0\leq \al<1$ and write $R$ instead of $R_\al$.
Section \ref{sec:def} tells us that given a tree $t$ with $n$ leaves we have the formula
$$\langle \Phi_\al(t)\de_e,\de_{\un j}\rangle = \sum_{\begin{subarray}{c}\omega\in\Omega(t)\\ \text{compatible with } (e,\un j) \end{subarray}}  \prod_{v \text{ a vertex of } t}R_v^\omega,$$
for any multi-index $\un j=(j_1,\cdots,j_n)$.
Fix $\omega\in\Omega(t)$ and assume that the $\omega$ coefficient of above is nonzero for a certain $\un j$.
Then there exists a maximal subrooted tree $z_\omega$ of $t$ such that the spin (i.e. the value of $\omega$) at each of its edges is the trivial group element $e\in\bF_2$.
Any vertex $v$ of the tree $z_\omega$ satisfies that $R_v^\omega=\al$.
If $f_\omega$ is the unique forest satisfying that $t=f_\omega\circ z_\omega$, then we have that any root $w$ of $f_\omega$ that is not a leaf of $f_\omega$ satisfies that $R_w^\omega=\sqrt{1-\al^2}$ since spins around it are necessarily $e,a,b$.
Then any other vertex $u$ of $f_\omega$ has its spins around it equal to $g,ga,gb$ for some $e\neq g\in \mathbb F_2$, and thus $R_u^\omega=1$.
We obtain that 
$$\prod_{v \text{ a vertex of } t}R_v^\omega=\al^{\target(z_\omega)-1} (1-\al^2)^{m(t,z_\omega)/2},$$ 
where $m(t,z_\omega)$ is the number of leaves of $z_\omega$ that are not leaves of $t$ (i.e. the number of nontrivial trees of $f_\omega$).
The spin of the edge with target the $\ell$th leaf of $t$ is the word in $a,b$ corresponding to the path in the forest $f_\omega$ starting at the root connected to $\ell$ and finishing at the leaf $\ell$.
Write $P(t,z_\omega)_\ell$ this word and $P(t,z_\omega)$ the corresponding $n$-tuple and note that by definition the multi-index $\un j$ is equal to $P(t,z_\omega)$.
Therefore given a multi-index $\un j$ there is at most one state $\omega\in\Omega(f)$ compatible with $(e,\un j)$ and having a nonzero coefficient $\prod_v R_v^\omega$. 
Moreover, $\un j$ has to be of the form $P(t,z)$ for some subrooted tree $z\leq t$.
Conversely, any subrooted tree $z$ of $t$ provides a unique state $\omega_z\in\Omega(t)$ that is compatible with $(e, P(t,z))$ defined inductively as follows: 
$$\omega_z(c) = \begin{cases} e & \text{ if $c$ is an edge of $z$};\\
a . \omega_z(d) & \text{ if $c$ goes to the left, its source is the target of an edge $d$ and } c\notin z;\\
b . \omega_z(d) & \text{ if $c$ goes to the right, its source is the target of an edge $d$ and } c\notin z.\\
\end{cases}
$$
We obtain the following formula
$$\Phi_\al(t) \de_e = \sum_{z\in E_t} \al^{\target(z)-1} (1-\al^2)^{m(t,z)/2} \de_{P(t,z)},$$
where $E_t$ is the set of subrooted trees (including the trivial subtree) of $t$.

Consider a pair of symmetric trees $( (t,\tau) , (s,\sigma) )$ and $g=\frac{(t,\tau)}{(s,\sigma)}\in V$.
We have that
\begin{equation}\label{equa:vara}\vara(g)= \sum_{z\in E_t, r\in E_s} \al^{\target(z)+\target(r)-2} (1- \al^2)^{(m(t,z)+m(s,r))/2} \langle \theta( \sigma ) \de_{ P(s,r) } , \theta( \tau) \de_{ P(t,z) } \rangle . \end{equation}
Fix a \emph{reduced} pair of \emph{affine} trees $( (t,\tau) , (s,\sigma) )$ and the group element $g=\frac{(t,\tau)}{ (s,\sigma)}$ that is in Thompson's group $T$ since our trees are affine.
We will show that all the terms in the sum \eqref{equa:vara} are equal to zero but one.

\begin{lemma}\label{lem:permtwo}
Consider some forests $p,q$ both of them having $m$ leaves and let $\sigma\in\Z/m\Z < S_m$ be a cyclic rotation.
If $\sigma(P(p)) = P(q)$, then $p$ and $q$ have the same number of roots $n$ and there exists $a\geq 0$ such that the $j$th tree of $p$ is equal to the $(j+a)$th modulo $n$ tree of $q$ for any  $1\leq j\leq n$.
\end{lemma}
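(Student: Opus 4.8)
The plan is to detect the boundaries between consecutive trees of a forest directly from its word tuple $P(\cdot)$, in a way that is manifestly invariant under cyclic rotations, and then to reduce to Lemma \ref{lem:perm}. The observation I would isolate first is the following equivalence: for a forest $p$ with $m$ leaves, the $i$th leaf of $p$ is the leftmost leaf of the tree of $p$ containing it \emph{if and only if} $P(p)_i$ is a (possibly trivial) power of the letter $a$. This is immediate from the definition of $P(p)_i$ as the word read along the path from a root of $p$ to its $i$th leaf, a left turn contributing $a$ and a right turn contributing $b$: that word is a power of $a$ precisely when every turn on the path is a left turn, i.e. precisely when the $i$th leaf is reached from the root of its tree by always going left, i.e. precisely when it is the leftmost leaf of that tree. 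Since the leaves of each tree of $p$ occupy a consecutive block of positions with the leftmost leaf first, the set $A_p := \{\, i : P(p)_i \in \{a^k : k\geq 0\}\,\}$ is exactly the set of first positions of the tree-blocks of $p$; in particular $\#A_p$ equals the number of roots of $p$, and $A_p$ records the partition of $\{1,\dots,m\}$ into the leaf-blocks of the individual trees of $p$.

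Next I would feed in the hypothesis $\sigma(P(p)) = P(q)$: since $\sigma$ is a cyclic rotation of $\{1,\dots,m\}$, it translates positions by a fixed amount, so it carries $A_p$ onto $A_q$ by that same translation. Comparing cardinalities gives that $p$ and $q$ have the same number of roots $n$; comparing the induced partitions gives that the cyclic partition of $\{1,\dots,m\}$ into the tree-blocks of $q$ is the rotation of the one for $p$. Hence each tree-block $B$ of $p$ is carried, by a fixed translation $i \mapsto i+c$ of $\{1,\dots,m\}$, bijectively and in a way preserving the cyclic order onto a tree-block $B+c$ of $q$ of the same size. (If $n=1$ this forces the translation to be trivial, so $\sigma = \id$ and $q = p$ by Lemma \ref{lem:perm}; trivial trees, whose unique leaf carries the empty word $e = a^0$ and is simultaneously leftmost and rightmost, are covered by the same statements.)

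To finish, fix a tree $t$ of $p$ supported on a block $B$ and the tree $u$ of $q$ supported on $B+c$, with leaves numbered $1,\dots,k$ along $B$ and along $B+c$ respectively. Matching leaves through the translation gives $P(u)_\ell = P(q)_{(\ell\text{th element of }B+c)} = P(p)_{(\ell\text{th element of }B)} = P(t)_\ell$ for every $\ell$, i.e. $P(u) = P(t)$ as ordered tuples, so Lemma \ref{lem:perm} applied with the identity permutation yields $u = t$. Numbering the trees of $p$ from left to right as $t_1,\dots,t_n$ and those of $q$ as $u_1,\dots,u_n$, the translation preserves the cyclic order of the blocks, so there is $a \in \{0,\dots,n-1\}$ such that the translate of the $j$th block of $p$ is the $(j+a)$th modulo $n$ block of $q$ for all $j$; combined with $u=t$ on matched blocks, this says precisely that the $j$th tree of $p$ equals the $(j+a)$th modulo $n$ tree of $q$, which is the assertion.

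I expect the entire argument to rest on the elementary equivalence of the first paragraph — tree boundary $\Leftrightarrow$ $a$-power word — which is the only step carrying a genuine idea; the rest is bookkeeping with cyclic intervals together with one application of Lemma \ref{lem:perm}, and the only point demanding care is keeping the cyclic reindexing (the integer $a$) consistent.
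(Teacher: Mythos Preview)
Your proof is correct and follows essentially the same approach as the paper's: characterize tree-block boundaries via the word structure (the paper uses both the ``power of $a$'' and ``power of $b$'' criteria for first and last leaves, whereas you use only the former, which suffices), observe that a cyclic rotation must respect this block structure, and then apply Lemma~\ref{lem:perm} tree by tree. Your bookkeeping via the set $A_p$ of block-start positions is a slightly cleaner packaging of the same idea the paper carries out by tracking first and last leaves of each $p_j$ separately.
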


\begin{proof}
Observe that if $f$ is a forest then the word $P(f)_i$ is a power of $a$ (resp. a power of $b$) if and only if $i$ corresponds to the first leaf (resp. the last leaf) of a tree of $f$.
Consider $p,q,\sigma$ as above and write $p_j$ and $q_k$ the $j$th and $k$th trees of $p$ and $q$ respectively.
Fix $j$ and note that the observation implies that there exists some natural numbers $a,b$ such that the first and the last leaves of $p_j$ are sent to the first leaf of $q_{j+a}$ and the last leaf of $q_{j+b}.$
If $a\neq b$ modulo the number of roots of $q$, then $P(p_j)$ would be equal to a tuple $P(f)$ of a forest $f$ having at least two trees and thus having at least two words that are a power of $b$ which is impossible.
Therefore, $\sigma$ realizes a bijection from the leaves of $p_j$ onto the leaves of $q_{j+a}$ for a certain $a$.
Since $\sigma$ is cyclic we obtain that the number $a$ does not depend on $j$.
We obtain that $P(p_j)=P(q_{j+a})$ for any $j$ which, by Lemma \ref{lem:perm}, implies that the trees $p_j$ and $q_{j+a}$ are equal.
\end{proof}

Assume that the $(z,r)$-term of the equality \eqref{equa:vara} is nonzero, then $P(s,r) = \sigma^{-1}\tau (P(t,z))$.
If $f(s,r), f(t,z)$ are the forest satisfying that $s = f(s,r)\circ r, t = f(t,z)\circ z$, then Lemma \ref{lem:permtwo} implies that there exists a cyclic permutation $\rho$ on the roots of $f(t,z)$ such that the $i$th tree of $f(s,r)$ is equal to the $\rho(i)$th tree of $f(t,z)$ and thus $s=\rho(f(t,z))\circ r$.
This implies that $g$ can be reduced as a fraction $\frac{(z,\tilde\tau)}{(r,\tilde \sigma)}$ for some permutations $\tilde\tau,\tilde\sigma$.
Since the pair $( (t,\tau) , (s,\sigma) )$ is already reduced we obtain that $z=t$ and $r=s$ and thus all the terms in the equality \eqref{equa:vara} are equal to zero except one.
Therefore, 
\begin{equation}\label{equa:varatwo}
\vara\left(\frac{(t,\tau)}{(s,\sigma)} \right)=\al^{2\target(t)-2} \text{ for any reduced pair of affine trees } ((t,\tau) , (s,\sigma)).\end{equation}
This implies that 
\begin{align*}
\lim_{g\to\infty}\vara(g) = 0 &  \text{ inside } T \text{ for any } 0\leq \al <1 ; \\
\lim_{\al\to 1} \vara(h) = 1 & \text{ for any } h\in T.
\end{align*}
Therefore, Thompson's group $T$ has the Haagerup property.

\begin{remark}\label{rem:Farley}
Farley constructed a proper cocycle $c:V\to H$ with values in a Hilbert space and showed in the proof of \cite[Theorem 2.4]{Farley03-H} that if $g\in V$ is described by a reduced pair of symmetric trees with $n$ leaves, then $\Vert c(g)\Vert^2 = 2n-2.$
This provides a family of positive definite functions $\phi_\beta(g):= \exp(-\beta \Vert c(g)\Vert^2)=\exp(-\beta(2n-2)), \beta\geq 0$ by Schoenberg's theorem.
Note that if $g\in T$, then Formula \eqref{equa:varatwo} implies $\vara(g) = \phi_{\exp(-\beta)}(g)$ for any $\al>0$.
However, this equality is no longer true for certain elements of $V$.
Consider $g=\frac{(t,\id)}{(t,(13))}\in V$ where $t=f_3f_1f_1$ is the full binary tree with four leaves. We have that $$\Phi_\al(t)\de_e = \al^3 \de_{e,e,e,e} + \al^2\sqrt{1-\al^2} ( \de_{e,e,a,b} + \de_{a,b,e,e} ) + \al(1-\al^2)\de_{a,b,a,b} + \sqrt{1-\al^2}\de_{aa,ba,ab, bb}.$$ Then $\vara(g) = \al^6 + \al^2(1-\al^2)^2\neq \al^6= f_{\exp(-\beta)}(g) $.
\end{remark}

We proved that $T$ has the Haagerup property by using the net of maps $\vara, 0<\al<1.$
One could hope to extend our proof using the same approximation of the identity for the larger group $V$. 
Unfortunately, the maps $\vara$ with $0<\al<1$ are no longer vanishing at infinity if we consider them as functions on $V$.
Indeed, consider the sequence of trees $x_n$ such that $x_2=f_1$ is the tree with two leaves and $x_{n+1}=f_1 x_n$ for $n\geq 2.$
Note that $x_n$ is a tree with $n$ leaves.
Let $s_n:=(x_n\bullet x_n)\circ f_1$ be the tree equal to the composition of $f_1$ with a copy of $x_n$ attached to each leaf of $f_1$.
Define the permutation $\sigma_n \in S_{2n}$ that is an involution and such that $\sigma_n(2i+1)=2i+1 + n$ and $\sigma_n(2j) =2j$ for any $1 \leq 2i+1 , 2 j\leq n$.
Hence, $\sigma_n$ sends any odd leaf of the first copy of $x_n$ to the same leaf in the second copy of $x_n$ and lets invariant the others.
We set $g_n:=\frac{(s_n,\sigma_n)}{(s_n,\id)}$ and note that this fraction is reduced.
If we consider $\vara(g)$ we observe that the term corresponding to $z=r=f_1$ in the formula \eqref{equa:vara} is nonzero and is equal to $\al^2 (1-\al^2)^2$.
Since all the terms of $\vara(g_n)$ are positives we obtain that $\vara(g_n)\geq \al^2 (1-\al^2)^2 >0$ for any $n\geq 2$ and thus $\vara(g)$ does not tend to zero when $g$ tends to infinity in $V$.

\end{document}